\newtheorem{thm}{\bf{Theorem}}
\newtheorem{prop}{\bf{Proposition}}
\newlength{\halfpagewidth}
\begin{document}


\title{Boundary Control of Coupled Reaction-Advection-Diffusion Systems with Spatially-Varying Coefficients} 


\author{Rafael~Vazquez
~and~Miroslav~Krstic~%
\thanks{R. Vazquez is with the Department of Aerospace Engineering, University of Seville, Seville, 41092, Spain (e-mail: rvazquez1@us.es)}
\thanks{M. Krstic is with
the Department of Mechanical Aerospace Engineering,
 University of California, San Diego, CA 92093-0411, USA 
(e-mail: krstic@ucsd.edu)}}


\markboth{IEEE Transactions On Automatic Control}%
{Shell \MakeLowercase{\textit{et al.}}: Bare Demo of IEEEtran.cls for Journals}

\maketitle

\begin{abstract}                          
Recently, the problem of boundary stabilization for unstable linear constant-coefficient coupled reaction-diffusion systems was solved by means of the backstepping method. The extension of this result to systems with advection terms and spatially-varying coefficients is challenging due to complex boundary conditions that appear in the equations verified by the control kernels. In this paper we address this issue by showing that these equations are essentially equivalent to those verified by  the control kernels for first-order hyperbolic coupled systems, which were recently found to be well-posed. The result therefore applies in this case, allowing us to prove $H^1$ stability for the closed-loop system. It also shows an interesting connection between backstepping kernels for coupled parabolic and hyperbolic problems.
\end{abstract}

\begin{IEEEkeywords}
Boundary control; backstepping; parabolic equations; advection-reaction-diffusion systems; distributed parameter systems.
\end{IEEEkeywords}

%
\IEEEpeerreviewmaketitle

\section{Introduction}
\IEEEPARstart{I}{n} a recent work~\cite{orlov}, the problem of boundary stabilization for general  linear \emph{constant-coefficient} coupled reaction-diffusion systems was resolved by means of the backstepping method~\cite{krstic}. However, the extension of this result to systems with \emph{advection} terms and \emph{spatially-varying} coefficients---as usually found in applications---is far from trivial. The main difficulty arises when trying to solve the partial differential equations verified by the control kernels (usually known as the ``backstepping kernel equations''). For $n$ states in a system of coupled parabolic equations, one needs to find $n^2$ control kernel verifying $n^2$ fully coupled \emph{second-order} hyperbolic equations in a triangular domain, with complicated boundary conditions. In the constant-coefficient case, it is possible to simplify the boundary conditions by assuming a certain kernel structure, and then the equations can be readily solved~\cite{orlov}. However, this procedure does not extend to the spatially-varying case and/or advection terms. In this work, we show that the kernel equations can be written (using some non-trivially-defined intermediate kernels) as a coupled system of $2n^2$ \emph{first-order} hyperbolic equations. Interestingly, these kernel equations are very similar to those found when applying backstepping to find boundary controllers for first-order hyperbolic coupled systems~\cite{long-linear}. A result recently obtained for this problem showed that the resulting kernel equations were well-posed and had piecewise differentiable solutions~\cite{long-nonlinear}. Applying this result in our case allows us to find a backstepping controller, and to prove $H^1$ exponential stability for the origin of the closed-loop system with arbitrary convergence rate. Our result shows an interesting and non-trivial connection between backstepping controllers for coupled parabolic and hyperbolic systems.

The problem presented in this paper could be addressed by other methods, including the semigroup approach (see for instance~\cite{liu}), eigenvalue assignment~\cite{barbu}, flatness~\cite{meurer2009tracking} or LQR~\cite{moura}. The main advantage of backstepping is that, once the well-posedness of the kernel equations has been established, analytical and numerical results are simple to obtain, including the well-posedness of the closed loop system in high-order Sobolev spaces or even explicit exact controllers~\cite{Vazquez2014}. Backstepping has proved itself to be an ubiquitous method for PDE control, with many other applications including, among others, flow control~\cite{vazquez,vazquez-coron}, nonlinear PDEs~\cite{vazquez2}, disturbance rejection~\cite{anfinsen2015disturbance,Hasan}, hyperbolic 1-D systems~\cite{vazquez-nonlinear,florent,krstic3}, adaptive control~\cite{krstic4}, wave equations~\cite{krstic2}, Korteweg-de Vries equations~\cite{cerpa}, and delays~\cite{krstic5}. Other recent results related to boundary control of parabolic systems include~\cite{jie}, where backstepping is applied to find multi-agent deployments in 3-D space, output-feedback boundary control for ball-shaped domains in any dimension~\cite{nball}, and design of output feedback laws for convection problems on annular domains (see~\cite{convloop}).

The structure of the paper is as follows. In Section~\ref{sec-plant} we introduce the problem and state our main result. We explain our design method (backstepping) and  show the stability of the closed-loop system in Section~\ref{sec-design}. Next, we prove that there is a solution to the backstepping kernel equations in Section~\ref{sec-kernels}. We conclude the paper with some remarks in Section~\ref{sec-conclusions}.

\section{Coupled reaction-advection-diffusion systems}\label{sec-plant}
Consider the following general linear spatially-varying reaction-advection-diffusion system
\begin{equation} \label{eqn-reactdif}
u_t=\partial_x\left(\Sigma(x) u_{x}\right)+\Phi(x) u_x+\Lambda(x)u,
\end{equation}
for $x\in[0,1]$, $t>0$, with $u\in\mathbb{R}^n$ defined as
\begin{eqnarray}
u&=&\left[\begin{array}{c}u_1\\ u_2 \\ \vdots \\ u_n \end{array}\right],
\end{eqnarray}
and the various coefficients appearing in (\ref{eqn-reactdif}) defined as
\begin{eqnarray}
\Sigma(x)&=&\left[\begin{array}{cccc}\epsilon_1(x)&0&\hdots&0 \\ 0&\epsilon_2(x)&\hdots&0 \\
\vdots&\vdots&\ddots&\vdots\\
 0&0&\hdots & \epsilon_N(x) \end{array}\right], \\ \Lambda(x)&=&\left[\begin{array}{cccc}\lambda_{11}(x)&\lambda_{12}(x)&\hdots&\lambda_{1n}(x) \\ \lambda_{21}(x)&\lambda_{22}(x)&\hdots&\lambda_{2n}(x) \\
\vdots&\vdots&\ddots&\vdots\\
 \lambda_{n1}(x)&\lambda_{n2}(x)&\hdots & \lambda_{nn}(x) \end{array}\right],\\
\Phi(x)&=&\left[\begin{array}{cccc}\phi_{11}(x)&\phi_{12}(x)&\hdots&\phi_{1n}(x) \\ \phi_{21}(x)&\phi_{22}(x)&\hdots&\phi_{2n}(x) \\
\vdots&\vdots&\ddots&\vdots\\
 \phi_{n1}(x)&\phi_{n2}(x)&\hdots & \phi_{nn}(x) \end{array}\right].
\end{eqnarray}
and 
with boundary conditions
\begin{eqnarray}
u(0,t)&=&0,\label{eqn-u0}\\
u(1,t)&=&U(t),\label{eqn-u1}
\end{eqnarray}
where $U(t)$ is the actuation, defined as
\begin{equation}
U(t)=\left[\begin{array}{c}U_1(t)\\ U_2(t) \\ \vdots \\ U_n(t) \end{array}\right].
\end{equation}
The only assumption on (\ref{eqn-reactdif}),(\ref{eqn-u0})--(\ref{eqn-u1}) is that these coefficients are sufficiently regular; in particular, it is required that the entries of $\Sigma(x)$ are three times differentiable, those of $\Phi(x)$ twice differentiable and those of $\Lambda(x)$ differentiable. In addition,
we assume that the states are ordered so that $\bar{\epsilon}\geq\epsilon_1(x)>\epsilon_2(x)>\hdots>\epsilon_n(x)\leq\underline{\epsilon}>0$. The diffusion coefficients could also be equal at some (or all) values of $x$ but to avoid technical complications we confine ourselves to the case of strict inequality.

Since (\ref{eqn-reactdif}), (\ref{eqn-u0})--(\ref{eqn-u1}) is potentially unstable depending on the values of the coefficients, the problem we consider is the design of a (full-state) feedback control law for $U(t)$ that makes the system stable for any possible value of the coefficients. 

We will make use of the $L^2([0,1])$ and $H^1([0,1])$ spaces, defined, respectively, as the space of square-integrable vector functions in the $[0,1]$ interval and the space of vector functions whose  derivative (with respect to $x$, defined in the weak sense~\cite{evans}) is square-integrable in the $[0,1]$ interval. For simplicity we will simply write $L^2$ and $H^1$. If $f\in L^2$ or $f\in H^1$ its norm will be written as $\Vert f \Vert_{L^2}$ or $\Vert f \Vert_{H^1}$, respectively, and computed with the following expressions
\begin{equation}
\Vert f \Vert_{L^2}=\int_0^1 \vert f(x) \vert^2 dx, \,
\Vert f \Vert_{H^1}=\Vert f \Vert_{L^2}+\int_0^1 \left| \frac{\partial f(x)}{\partial x} \right|^2 dx,
\end{equation}
where $\vert\cdot \vert$ denotes the regular Euclidean norm. In addition we will use $L^2$ spaces with respect to time, which are analogously defined. Rather than using a more complex notation, we will denote the $L^2$ norm with respect to time equally as $\Vert \cdot \Vert_{L^2}$, and since it will only be used for functions only depending on time it should be clear from the context what $L^2$ norm we are referring to.

Define $C$ as a diagonal matrix of constant positive coefficients, i.e.,
\begin{equation}
C=\left[\begin{array}{cccc}c_1&0&\hdots&0 \\ 0&c_2&\hdots&0 \\
\vdots&\vdots&\ddots&\vdots\\
 0&0&\hdots & c_n\end{array}\right],
 \end{equation}
 with $c_1,c_2,\hdots,c_n>0$, whose values can be chosen but should be sufficiently large (see Section~\ref{sec-stabtarget}).

Next, we state our main result that solves the stabilization problem in $H^1$. 
\begin{thm}\label{th-main}
Consider system (\ref{eqn-reactdif}), (\ref{eqn-u0})--(\ref{eqn-u1}) with initial condition $u_0\in H^1$ and feedback control law
\begin{equation}
U(t)=\int_0^1 K(1,\xi) u(\xi,t) d\xi+b(t),
\end{equation}
where the kernel matrix $K(x,\xi)$ is a solution from the following hyperbolic \emph{matrix} system of PDEs
\begin{eqnarray}
&&\partial_x\left(\Sigma(x) K_{x}\right) -\partial_{\xi}\left( K_{\xi}\Sigma(\xi)\right)+ \Phi(x)  K_x
+ K_{\xi}\Phi(\xi) 
\nonumber \\Ê&=&K(x,\xi)\Lambda(\xi)+CK(x,\xi)+K(x,\xi) \Phi'(\xi), \label{eqn-Kdif}
\end{eqnarray}
in the domain $\mathcal{T}=\{(x,\xi):0\leq \xi \leq x \leq 1\}$,
with boundary conditions
\begin{eqnarray}
&&\Phi(x) K(x,x)-K(x,x)\Phi(x)+\Lambda(x)+C+K_{\xi}(x,x)\Sigma(x)
\nonumber \\ &&
+\Sigma(x) K_x(x,x)+\frac{d}{dx} \left(\Sigma(x)K(x,x)\right)=0,\label{eqn-Kbc1}\\
&&\Sigma(x) K(x,x)-K(x,x) \Sigma(x) =0,\label{eqn-Kbc2}\\
&&K_{ij}(x,0)=0,\quad i\leq j\label{eqn-Kbc3} 
\end{eqnarray}
and $b(t)$ is defined as
\begin{equation}\label{eqn-defb}
b(t)=\left(u_0(1,t)-\int_0^1 K(1,\xi) u_0(\xi) d\xi \right)\mathrm{e}^{-\alpha_1 t},
\end{equation}
for any chosen $\alpha_1>0$. Then, there is a unique $u(\cdot,t)\in H^1$ solution to  (\ref{eqn-reactdif}), (\ref{eqn-u0})--(\ref{eqn-u1}), and in addition, there exists a number $c^*$ depending only on the coefficients $\Sigma(x)$ and $\Phi(x)$, so that if the values of the coefficients of $C$ verify $c_i\geq c^*+\delta$, for all $i=1,\hdots,n$, and for some $\delta>0$, then the origin $u \equiv 0$ is exponentially stable in the $H^1$ norm, i.e.,
\begin{equation}
\Vert u(\cdot,t) \Vert_{H^1} \leq C_1 \mathrm{e}^{-C_2 t} \Vert u_0 \Vert_{H^1},
\end{equation}
with $C_1,C_2>0$, where $C_2=\min\{\alpha_1,2\delta\}$.
\end{thm}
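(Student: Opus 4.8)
The plan is to use the backstepping (Volterra) transformation
\[
w(x,t) = u(x,t) - \int_0^x K(x,\xi)\, u(\xi,t)\, d\xi
\]
to map the plant (\ref{eqn-reactdif}), (\ref{eqn-u0})--(\ref{eqn-u1}) into a target system in which the possibly destabilizing reaction matrix $\Lambda(x)$ is replaced by the stabilizing diagonal damping $-C$, namely $w_t = \partial_x(\Sigma(x) w_x) + \Phi(x) w_x - C w$, with $w(0,t)=0$ and $w(1,t)=b(t)$. First I would differentiate $w$ in time, substitute the plant equation, integrate by parts twice in $\xi$, and collect terms; demanding that the result coincide with the target system yields \emph{exactly} the interior kernel equation (\ref{eqn-Kdif}) together with the boundary conditions (\ref{eqn-Kbc1})--(\ref{eqn-Kbc3}). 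The control law then follows since $w(1,t)=U(t)-\int_0^1 K(1,\xi) u\, d\xi = b(t)$, and the definition (\ref{eqn-defb}) makes $b(t)=w(1,0)\,\mathrm{e}^{-\alpha_1 t}$; note that this $b(t)$ is precisely what is needed to make the boundary value at $x=1$ compatible with the (generally nonzero) initial mismatch $w(1,0)$ while still decaying exponentially. Existence and regularity of $K$ are supplied by Section~\ref{sec-kernels}, so I would treat $K$ as a known, sufficiently regular kernel.

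Next I would establish invertibility. Being of Volterra type, the transformation admits a bounded inverse $u(x,t)=w(x,t)+\int_0^x L(x,\xi) w(\xi,t)\, d\xi$ with $L$ again piecewise differentiable, and both the direct and inverse maps are bounded on $L^2$ and on $H^1$. This equivalence lets me (i) transfer well-posedness, since the target is a standard parabolic system with exponentially decaying boundary data and hence has a unique $H^1$ solution, so the closed loop does too; and (ii) reduce the stability claim for $\Vert u \Vert_{H^1}$ to the same estimate for $\Vert w \Vert_{H^1}$.

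The core of the argument is a Lyapunov estimate for the target system. I would take
\[
V(t) = \frac{a}{2}\int_0^1 |w|^2\, dx + \frac{1}{2}\int_0^1 w_x^T \Sigma(x)\, w_x\, dx,
\]
which is equivalent to $\Vert w \Vert_{H^1}^2$ because $\Sigma$ is uniformly positive definite. Differentiating and integrating by parts, the $L^2$ part yields $-\int w_x^T \Sigma w_x$ and $-a\int w^T C w$ plus indefinite cross terms coming from $\Phi$ and $\Sigma'$; for the gradient part I would differentiate $\int w_x^T \Sigma w_x$, integrate by parts in $x$, and substitute $\partial_x(\Sigma w_x)=w_t-\Phi w_x+Cw$ to produce a dominant $-\int |w_t|^2$ term together with further cross terms in $\Phi$, $\Sigma'$ and $C$ (the latter absorbable into $V$ since $C$ is constant). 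The crucial point is that every indefinite cross term can be bounded, via Young's and Poincaré's inequalities, by a constant depending only on $\Sigma$ and $\Phi$ times the positive quadratic terms; choosing each $c_i \geq c^*+\delta$ then makes the damping strictly dominate, giving $\dot V \le -2\delta V + g(t)$, where $g(t)$ collects the boundary contributions at $x=1$ (involving $b(t)$ and $b'(t)=-\alpha_1 b(t)$ after the integrations by parts) and is $O(\mathrm{e}^{-\alpha_1 t})$.

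Finally I would integrate this inequality: the comparison lemma gives $V(t)\le \mathrm{e}^{-2\delta t}V(0)+\int_0^t \mathrm{e}^{-2\delta(t-s)} g(s)\, ds$, and since $g(s)\sim \mathrm{e}^{-\alpha_1 s}$ the convolution decays like $\mathrm{e}^{-\min\{\alpha_1,2\delta\}t}$, so $\Vert w(\cdot,t)\Vert_{H^1}\le C\,\mathrm{e}^{-C_2 t}\Vert w_0\Vert_{H^1}$ with $C_2=\min\{\alpha_1,2\delta\}$; applying the bounded invertible transformation returns the stated bound on $\Vert u(\cdot,t)\Vert_{H^1}$. I expect the main obstacle to be the Lyapunov computation itself: controlling the boundary terms generated by the nonzero, time-varying value $w(1,t)=b(t)$ (which reappear after each integration by parts, and through $w_t(1,t)=b'(t)$ in the gradient-level estimate), and verifying that all indefinite cross terms arising from the advection $\Phi$ and the spatial variation of $\Sigma$ can be absorbed by the $C$-damping through a threshold $c^*$ that is independent of $\Lambda$.
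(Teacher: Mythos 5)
There is a genuine gap, and it sits at the very first step of your argument: the target system you postulate, $w_t=\partial_x(\Sigma(x) w_x)+\Phi(x)w_x-Cw$, is not achievable by the backstepping transformation in the coupled case. When you differentiate the transformation, substitute the plant, and integrate by parts twice, the boundary term $K(x,0)\Sigma(0)u_x(0,t)$ survives (only $u(0,t)=0$ is available, not $u_x(0,t)=0$). To match your target system this term would have to vanish identically, i.e.\ $K_{ij}(x,0)=0$ for \emph{all} $i,j$ — but the theorem's boundary condition (\ref{eqn-Kbc3}) imposes this only for $i\leq j$, and this is not an oversight: with distinct diffusion coefficients, the entries $K_{ij}$ with $i>j$ already carry the conditions $K_{ij}(x,x)=0$ and the derivative condition coming from (\ref{eqn-Kbc1}), and their characteristics exit through $\xi=0$, so prescribing $K_{ij}(x,0)=0$ as well would over-determine the kernel equations (this is precisely why the well-posedness argument in Section~\ref{sec-kernels} has to introduce the arbitrary data $K_{ij}(1,\xi)=l_{ij}(\xi)$ for $i>j$). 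The paper therefore takes as target system (\ref{eqn-reacdiftarg}), which contains the additional non-local trace term $-G(x)w_x(0,t)$, where $G$ is strictly lower triangular with entries \emph{defined} by $g_{ij}(x)=-K_{ij}(x,0)\epsilon_j(0)$; your derivation cannot yield ``exactly'' (\ref{eqn-Kdif})--(\ref{eqn-Kbc3}) without this term.

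This omission is not cosmetic, because the trace term is what makes the stability analysis hard. Your Lyapunov computation (an $H^1$-level functional with weights $a$ and $\Sigma(x)$, Young/Poincar\'e absorption of the $\Phi$ and $\Sigma'$ cross terms into the $C$-damping) would indeed work for your clean target system, but for the true target system one must additionally dominate the term involving $w_x(0,t)$, which is not controlled by $\Vert w\Vert_{H^1}$ alone. The paper handles it by working with $V_1+V_2$ but letting the dissipation produce a $-V_3$ term (involving $w_{xx}$), bounding $|w_x(0,t)|^2$ by a trace inequality in terms of $V_2+V_3$, and then choosing the diagonal weight matrix $Q$ so that $R=\frac{\underline{\epsilon}}{4}Q-K_8L^TQL$ is positive definite — a nontrivial inductive construction using Weyl's inequality, exploiting that $G$ (hence $L$) is strictly lower triangular. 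The threshold $c^*$ emerges from this argument, not merely from absorbing $\Phi$- and $\Sigma'$-cross terms. To repair your proof you would need to (i) replace your target system by (\ref{eqn-reacdiftarg}) with $G$ given by (\ref{eqn-G}), (\ref{eqn-Gx}), and (ii) redo the Lyapunov estimate so that the $w_x(0,t)$ contribution is absorbed, which forces the weighted-norm construction described above (or an equivalent device).
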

\vspace{0.3cm}
In Theorem~\ref{th-main}, the main question is if the kernel equations (\ref{eqn-Kdif})--(\ref{eqn-Kbc3}) do indeed have a solution, as implicitly assumed in the theorem's statement. The next result answers this question.
\begin{thm}\label{th-kernels}
The kernel equations (\ref{eqn-Kdif})--(\ref{eqn-Kbc3}) possess a piecewise differentiable solution in the domain $\mathcal{T}$. In addition, the transformation defined by 
\begin{equation}
g(x)=f(x)-\int_0^x K(x,\xi) f(\xi)d\xi,
\end{equation}
is an invertible transformation. Both the transformation and its inverse map $H^1$ functions into $H^1$ functions, verifying
$$ \Vert g \Vert_{H^1} \leq K_1 \Vert f \Vert_{H^1},\,\Vert f \Vert_{H^1} \leq K_2 \Vert g \Vert_{H^1}.$$
\end{thm}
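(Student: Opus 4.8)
The plan is to establish existence of the piecewise differentiable solution by reducing the second-order hyperbolic matrix system \eqref{eqn-Kdif} to a first-order hyperbolic system of the type whose well-posedness is already known from~\cite{long-nonlinear}, and then to handle invertibility and the $H^1$ bounds by classical Volterra arguments.

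First I would introduce intermediate kernels to lower the order of \eqref{eqn-Kdif}. Because $\Sigma$ is diagonal, the principal part of the $(i,j)$ entry is $\epsilon_i\partial_{xx}K_{ij}-\epsilon_j\partial_{\xi\xi}K_{ij}$, a wave operator with distinct speeds $\sqrt{\epsilon_i}$ and $\sqrt{\epsilon_j}$. I would first normalize these speeds by a spatial change of variables (e.g.\ $\bar{x}=\int_0^x ds/\sqrt{\epsilon_i(s)}$ applied componentwise) and then adjoin to each of the $n^2$ entries of $K$ an auxiliary kernel built from its first derivatives (a combination of $\Sigma K_x$ together with a mixed $x$--$\xi$ derivative term), so that the pair solves a transport-type system. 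This produces a coupled set of $2n^2$ first-order equations whose structure matches the hyperbolic kernel equations of~\cite{long-linear}. I expect this step---choosing the auxiliary kernels and the coordinate change so that both the PDE and, crucially, all three boundary conditions \eqref{eqn-Kbc1}--\eqref{eqn-Kbc3} take exactly the admissible form required by~\cite{long-nonlinear}---to be the main obstacle. Observe that, since $\Sigma$ is diagonal with distinct entries, \eqref{eqn-Kbc2} is equivalent to $K_{ij}(x,x)=0$ for $i\neq j$, which together with \eqref{eqn-Kbc1} pins down the data along the characteristic boundary $\xi=x$, while \eqref{eqn-Kbc3} supplies the data on $\xi=0$; verifying that these translate into a well-posed boundary specification for the reduced first-order system is the delicate part. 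Once the system is in the right form, the result of~\cite{long-nonlinear} yields a piecewise differentiable solution, and undoing the change of variables returns a piecewise differentiable $K$ on $\mathcal{T}$.

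For invertibility, I would note that $g(x)=f(x)-\int_0^x K(x,\xi)f(\xi)\,d\xi$ is a Volterra integral transformation of the second kind in $x$. Such transformations are always invertible: the inverse has the form $f(x)=g(x)+\int_0^x L(x,\xi)g(\xi)\,d\xi$, where the resolvent kernel $L$ is obtained by successive approximations (equivalently, $L$ solves its own Volterra equation driven by $K$). Since $K$ is bounded and piecewise differentiable, the associated series converges and $L$ inherits the same boundedness and piecewise differentiability.

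Finally, for the $H^1$ bounds I would estimate directly. Boundedness of $K$ together with Cauchy--Schwarz gives $\Vert g\Vert_{L^2}\leq C\Vert f\Vert_{L^2}$. Differentiating,
\begin{equation}
g'(x)=f'(x)-K(x,x)f(x)-\int_0^x K_x(x,\xi)f(\xi)\,d\xi,
\end{equation}
which holds in the weak sense since $K$ is only piecewise $C^1$. Using that the trace $K(x,x)$ and the derivative $K_x$ are bounded on $\mathcal{T}$, the right-hand side is controlled by $\Vert f\Vert_{H^1}$, yielding $\Vert g\Vert_{H^1}\leq K_1\Vert f\Vert_{H^1}$. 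The identical argument applied to the resolvent kernel $L$ gives $\Vert f\Vert_{H^1}\leq K_2\Vert g\Vert_{H^1}$, completing the proof.
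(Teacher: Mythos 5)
There is a genuine gap: you have correctly identified the overall strategy (reduce the second-order system \eqref{eqn-Kdif} to a $2n^2$-dimensional first-order hyperbolic system and invoke the well-posedness result of~\cite{long-nonlinear}), but the step you yourself flag as ``the main obstacle'' and ``the delicate part'' is precisely the entire content of the theorem, and your proposal does not carry it out. The paper's proof resolves it concretely: it defines the auxiliary kernel
\begin{equation}
L(x,\xi)=\sqrt{\Sigma}(x)K_x+K_{\xi}\sqrt{\Sigma}(\xi)+F_1(x)K+KF_2(\xi),
\end{equation}
where $F_1,F_2$ are \emph{not} free---they are chosen entrywise, $(F_1)_{ij}=\bigl(\delta_{ij}\tfrac{\epsilon_i'(x)}{2}+\phi_{ij}(x)\bigr)/\bigl(\sqrt{\epsilon_i}(x)+\sqrt{\epsilon_j}(x)\bigr)$ and similarly for $F_2$, by solving Sylvester-type equations (possible by~\cite{jameson} since $\sqrt{\epsilon_i}+\sqrt{\epsilon_j}>0$) so that all $K_x$ and $K_{\xi}$ terms on the right-hand side of the equation for $L$ cancel. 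This cancellation is indispensable: merely factorizing the principal part $\epsilon_i\partial_{xx}-\epsilon_j\partial_{\xi\xi}$, as your plan does, leaves first-derivative terms coming from $\Phi$ and $\Sigma'$ in the reduced system, which is then \emph{not} of the admissible form (zeroth-order couplings only) treated in~\cite{long-nonlinear}. Your proposed componentwise change of variables $\bar{x}=\int_0^x ds/\sqrt{\epsilon_i(s)}$ is also problematic for a matrix kernel, since each entry $K_{ij}$ would require different scalings in its two arguments, deforming the common triangular domain $\mathcal{T}$; the paper avoids any coordinate change and lets the cited result handle the spatially-varying speeds directly.

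The second missing piece is the boundary analysis. The paper must convert \eqref{eqn-Kbc1}--\eqref{eqn-Kbc3} into data for the $(K,L)$ system: it integrates the diagonal condition to get $K_{ii}(x,x)$ explicitly, derives $L_{ii}(x,x)=-\bigl(\lambda_{ii}(x)+c_i\bigr)/\bigl(2\sqrt{\epsilon_i(x)}\bigr)$ and $L_{ij}(x,x)=-\lambda_{ij}(x)/\bigl(\sqrt{\epsilon_i(x)}+\sqrt{\epsilon_j(x)}\bigr)$ for $i\neq j$ (using $K_{ijx}(x,x)=-K_{ij\xi}(x,x)$ on the diagonal), and---crucially---observes that for the subdiagonal entries $i>j$ one must \emph{add} arbitrary boundary data $K_{ij}(1,\xi)=l_{ij}(\xi)$ on the edge $x=1$, subject to corner compatibility conditions, for the first-order system to match the well-posed setup of~\cite{long-nonlinear}; these entries in turn define $G$ via $g_{ij}(x)=-K_{ij}(x,0)\epsilon_j(0)$. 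Your proposal never mentions this extra data, without which the reduced system is underdetermined. By contrast, your treatment of invertibility (Volterra resolvent by successive approximations) and of the $H^1$ bounds (differentiating the transformation and bounding traces and derivatives of $K$) is fine and in fact more explicit than the paper's one-line remark---but it rests on a piecewise differentiable $K$ whose existence your argument does not actually establish.
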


In the next sections we prove Theorem~\ref{th-main} and~\ref{th-kernels}, respectively in sections~\ref{sec-design} and~\ref{sec-kernels}.

\section{Control law design and closed-loop stability (proof of Theorem~\ref{th-main})}\label{sec-design} 

We stabilize (\ref{eqn-reactdif}), (\ref{eqn-u0})--(\ref{eqn-u1}) by applying the backstepping method. Next we explain the method and show that the origin of the resulting closed-loop system is stable in the $H^1$ norm.

\subsection{Backstepping transformation and target system}
The main idea of backstepping is to use a transformation mapping (\ref{eqn-reactdif}), (\ref{eqn-u0})--(\ref{eqn-u1}) into an stable \emph{target} system, which has to be adequately chosen. We select the following system
\begin{equation}\label{eqn-reacdiftarg}
w_t=\partial_x\left(\Sigma(x) w_{x}\right)+\Phi(x) w_x-Cw-G(x)w_x(0,t),
\end{equation}
where the target state $w$ is defined as
\begin{equation}
w=\left[\begin{array}{c}w_1\\ w_2 \\ \vdots \\ w_n \end{array}\right].
\end{equation}
with boundary conditions
\begin{equation}
w(0,t)=0,\qquad w(1,t)=b(t),\label{eqn-bctarg}
\end{equation}
whose stability properties will be studied in Section~\ref{sec-stabtarget}. The matrix $G(x)$ appearing in (\ref{eqn-reacdiftarg}) is a lower triangular matrix with zero diagonal, i.e.,
 \begin{equation}\label{eqn-G}
 G=\left[\begin{array}{ccccc}0&0&\hdots&0&0 \\ g_{21}(x)&0&\hdots&0&0 \\
\vdots&\vdots&\ddots&\vdots&\vdots\\
 g_{(n-1)1}(x)&g_{(n-1)2}(x)&\hdots &0& 0 \\
 g_{n1}(x)&g_{n2}(x)&\hdots &g_{n(n-1)}(x)& 0 \end{array}\right].
 \end{equation}
 The values of the non-zero entries of $G(x)$ are not arbitrary and will be set later in the design.
 
The backstepping transformation that maps $u$ into $w$ is defined as
\begin{eqnarray}\label{eqn-tran}
w(x,t)=u(x,t)-\int_0^x K(x,\xi) u(\xi,t) d\xi,
\end{eqnarray}
where the kernel matrix $K(x,\xi)$ is given by
 \begin{equation}
 K(x,\xi)=\left[\begin{array}{cccc}K^{11}(x,\xi)&K^{12}(x,\xi)&\hdots&K^{1n}(x,\xi) \\ K^{21}(x,\xi)&K^{22}(x,\xi)&\hdots&K^{2n}(x,\xi) \\
\vdots&\vdots&\ddots&\vdots\\
 K^{n1}(x,\xi)&K^{n2}(x,\xi)&\hdots & K^{nn}(x,\xi) \end{array}\right].
\end{equation}
Next we explain how to find conditions for $K(x,\xi)$ so that in fact (\ref{eqn-reacdiftarg}) holds.

\subsection{Finding the kernel equations}
First we establish   (\ref{eqn-Kdif})--(\ref{eqn-Kbc3}).
To find the equations that the kernel matrix $K(x,\xi)$ must verify, we take time and space derivatives in (\ref{eqn-tran})
\begin{eqnarray}\label{eqn-trandert}
w_t\hspace{-4pt}
&\hspace{-4pt}=\hspace{-4pt}&\hspace{-4pt} u_t-\int_0^x K u_t(\xi,t) d\xi,\\ \label{eqn-tranderx}
w_x\hspace{-4pt}
&\hspace{-4pt}=\hspace{-4pt}&\hspace{-4pt}u_x-\int_0^x K_x u(\xi,t) d\xi-K(x,x)u,\\ \label{eqn-tranderxx}
\partial_x\left(\Sigma(x) w_x\right)\hspace{-4pt}
&\hspace{-4pt}=\hspace{-4pt}&\hspace{-4pt}\partial_x\left(\Sigma(x) u_x\right)-\int_0^x \partial_x\left(\Sigma(x) K_x\right) u(\xi,t) d\xi
\nonumber \\ &&
-\partial_x\left(\Sigma(x) K(x,x) u(x,t) \right)
\nonumber \\ &&
-\Sigma(x) K_x(x,x) u(x,t).
\end{eqnarray}
and substituting (\ref{eqn-reactdif}) and (\ref{eqn-reacdiftarg})
 inside (\ref{eqn-trandert}) we find
 \begin{eqnarray}\label{eqn-trandert2}
&& \partial_x\left(\Sigma(x) w_{x}\right)+\Phi(x) w_x-Cw-G(x)w_x(0,t) \nonumber \\
&=&\partial_x\left(\Sigma(x) u_{x}\right)+\Phi(x) u_x+\Lambda(x)u
\nonumber \\Ê&&
-\int_0^x K(x,\xi) \left[
\partial_{\xi} \left(\Sigma(\xi) u_{\xi}(\xi,t) \right)+\Phi(\xi) u_{\xi}(\xi,t) 
\right. \nonumber \\Ê&& \left.
+\Lambda(\xi)u(\xi,t) 
\right]d\xi.
\end{eqnarray}
Using now (\ref{eqn-tranderx}) and (\ref{eqn-tranderxx}) we find
\begin{eqnarray}\label{eqn-trandert3}
&& -\int_0^x \partial_x\left(\Sigma(x) K_x\right) u(\xi,t) d\xi-\Sigma(x) K_x(x,x) u(x,t)
\nonumber \\ &&
-\partial_x\left(\Sigma(x) K(x,x) u(x,t) \right)
-\int_0^x \Phi(x)  K_x u(\xi,t) d\xi
\nonumber \\ &&
-\Phi(x) K(x,x)u
-Cu(x,t)+\int_0^x CK u(\xi,t) d\xi
\nonumber \\ &&
-G(x)u_x(0,t)
 \nonumber \\
&=&\Lambda(x)u
-\int_0^x K \left[
\partial_{\xi} \left(\Sigma(\xi) u_{\xi}(\xi,t) \right)+\Phi(\xi) u_{\xi}(\xi,t) 
\right. \nonumber \\Ê&& \left.
+\Lambda(\xi)u(\xi,t) 
\right]d\xi,
\end{eqnarray}
and integrating by parts twice in the right-hand side integral of (\ref{eqn-trandert3}) we find
\begin{eqnarray}\label{eqn-trandert4}
\hspace{-4pt}
&\hspace{-4pt}\hspace{-4pt}&\hspace{-4pt}  -\int_0^x \partial_x\left(\Sigma(x) K_x\right)u(\xi,t) d\xi-\Sigma(x) K_x(x,x) u(x,t)
\nonumber \\ &&
-\partial_x\left(\Sigma(x) K(x,x) u(x,t) \right)
-\int_0^x \Phi(x)  K_x u(\xi,t) d\xi
\nonumber \\ &&
-\Phi(x) K(x,x)u
-Cu(x,t)+\int_0^x CK u(\xi,t) d\xi
\nonumber \\ &&
-G(x)u_x(0,t)
\nonumber \\
\hspace{-4pt}
&\hspace{-4pt}\leq\hspace{-4pt}&\hspace{-4pt} \Lambda(x)u
-K(x,x) \Sigma(x) u_{x}(x,t) 
+ K(x,0) \Sigma(0) u_{x}(0,t) 
\nonumber \\ &&
+ K_{\xi}(x,x) \Sigma(x) u(x,t) 
-\int_0^x \partial_{\xi} \left(K_{\xi} \Sigma(\xi)\right) u(\xi,t) d\xi
\nonumber \\ &&
+\int_0^x K_{\xi}\Phi(\xi) u(\xi,t) d\xi+\int_0^x K\Phi'(\xi) u(\xi,t) d\xi
\nonumber \\ &&
- K(x,x)\Phi(x) u(x,t) 
-\int_0^x K\Lambda(\xi)u(\xi,t) d\xi,
\end{eqnarray}
where the boundary condition of $u$ at $x=0$ has been used. We separately collect the terms in (\ref{eqn-trandert4}) affecting $u(x,t)$, $u_x(x,t)$, $u_x(0,t)$ and in the integrals, reaching four equations that need to be independently verified if (\ref{eqn-trandert4}) is to hold for any value of $u$. These equations are as follows. First we find a hyperbolic matrix PDE
\begin{eqnarray}
&&  \partial_x\left(\Sigma(x) K_x\right)
- \partial_{\xi} \left(K_{\xi} \Sigma(\xi)\right) 
+ \Phi(x)  K_x
+ K_{\xi}\Phi(\xi) \nonumber \\
&=&
K\Lambda(\xi)+CK
- K\Phi'(\xi), \label{eqn-kernel1}
\end{eqnarray}
where we have omitted the dependence of $K(x,\xi)$. Next, we find three additional conditions
\begin{eqnarray} \label{eqn-Gx}
G(x)&=&- K(x,0) \Sigma(0) ,\\  \label{eqn-Kbc1}
K(x,x) \Sigma(x)&=&\Sigma(x) K(x,x) ,\\
C+\Lambda(x)
&=&-\Sigma(x) K_x(x,x)-\partial_x\left(\Sigma(x) K(x,x) \right)
\nonumber \\Ê&&
 -\Phi(x) K(x,x)- K_{\xi}(x,x) \Sigma(x) 
 \nonumber \\Ê&&
+K(x,x)\Phi(x). \label{eqn-Kbc2}
\end{eqnarray}

Finally, using the structure of $G$ given in (\ref{eqn-G}) in the boundary condition (\ref{eqn-Kbc1}), we find that, on the one hand,
$$
K_{ij}(x,0)=0,\qquad \forall j\geq i,$$
which is the boundary condition explicitly named in (\ref{eqn-Kbc3}),
and on the other hand, 
$$
g_{ij}(x)=- K_{ij}(x,0) \epsilon_j(0),\qquad \forall j<i,
$$
which is the \emph{definition} of the non-zero coefficients of $G(x)$.

\subsection{Target system stability}\label{sec-stabtarget}
The following result holds for the target system.
\begin{prop} \label{prop-wptarget}
The system (\ref{eqn-reacdiftarg}) with boundary conditions (\ref{eqn-bctarg}) and initial conditions $w_0\in H^1$, verifying $w_0(1)=b(0)$ and with $b,\dot b \in L^2((0,\infty])$ has an unique solution $w(\cdot,t) \in H^1$. In addition, there exists a number $c^*$ depending only on the coefficients of the system so that if the coefficients of $C$ verify $c_i\geq c^*+\delta$, for all $i=1,\hdots,n$, and for some $\delta>0$,  then the origin $w \equiv 0$ is exponentially stable in the $H^1$ norm, i.e.,
\begin{equation}
\Vert w(\cdot,t) \Vert_{H^1} \leq D_1 \mathrm{e}^{-2\delta t} \Vert w_0 \Vert_{H^1}+D_3 \left(\Vert b \Vert_{L^2}+\Vert \dot b \Vert_{L^2} \right).
\end{equation}
\end{prop}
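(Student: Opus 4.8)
The plan is to proceed in three stages: reduce to homogeneous boundary data, establish well-posedness by a perturbed-analytic-semigroup argument, and prove the decay estimate with a weighted $H^1$ Lyapunov functional. First I would remove the inhomogeneous condition $w(1,t)=b(t)$ in (\ref{eqn-bctarg}) by the substitution $v(x,t)=w(x,t)-\ell(x)b(t)$, where $\ell$ is a fixed smooth profile with $\ell(0)=0$, $\ell(1)=1$ (e.g.\ $\ell(x)=x$). Then $v$ vanishes at both endpoints, the compatibility $w_0(1)=b(0)$ guarantees $v_0=w_0-\ell\,b(0)\in H^1$ vanishes at the endpoints, and $v$ satisfies (\ref{eqn-reacdiftarg}) with the added forcing
\begin{equation}
F(x,t)=\big[\partial_x(\Sigma(x)\ell'(x))+\Phi(x)\ell'(x)-C\ell(x)-G(x)\ell'(0)\big]b(t)-\ell(x)\dot b(t).
\end{equation}
By hypothesis $b,\dot b\in L^2(0,\infty)$, so $\Vert F(\cdot,t)\Vert_{L^2}\le M(|b(t)|+|\dot b(t)|)$ with $M$ depending only on the coefficients; this is precisely the channel through which $\Vert b\Vert_{L^2}+\Vert\dot b\Vert_{L^2}$ enters the final bound and fixes $D_3$.

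Next I would write the homogeneous problem as an abstract evolution equation $v_t=\mathcal{A}v+F$ on $L^2$, with $\mathcal{A}v=\partial_x(\Sigma v_x)+\Phi v_x-Cv-G v_x(0)$ on the domain $D(\mathcal{A})=H^2\cap H^1_0$. The principal part $\partial_x(\Sigma v_x)$ is uniformly elliptic ($\epsilon_i(x)\ge\underline\epsilon>0$) and, under the $H^1_0$ conditions, self-adjoint and negative, hence generates an analytic semigroup; the term $-Cv$ is bounded, $\Phi v_x$ is relatively bounded with relative bound zero, and the nonlocal term $-G v_x(0)$ is controlled through the trace inequality $|v_x(0)|\le\eta\Vert v_{xx}\Vert_{L^2}+C_\eta\Vert v_x\Vert_{L^2}$, so it too is $\mathcal{A}$-bounded with arbitrarily small relative bound. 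The standard perturbation theorem for analytic semigroup generators then shows $\mathcal{A}$ generates an analytic semigroup, and since $v_0\in H^1$ and $F\in L^2((0,\infty);L^2)$, maximal parabolic regularity yields a unique $v\in C([0,\infty);H^1)$, whence $w=v+\ell b\in C([0,\infty);H^1)$. Parabolic smoothing makes the trace $w_x(0,t)$ well defined for $t>0$, so (\ref{eqn-reacdiftarg}) is meaningful pointwise in time.

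For the decay I would use a component-weighted $H^1$ functional
\begin{equation}
V(t)=\sum_{i=1}^{n}\mu_i\int_0^1\Big(w_i^2+\epsilon_i(x)\,w_{i,x}^2\Big)\,dx,
\end{equation}
with a hierarchy of weights $\mu_1\gg\mu_2\gg\cdots\gg\mu_n$. Differentiating along (\ref{eqn-reacdiftarg}), the $L^2$ part produces the diffusion dissipation $\sim-\underline\epsilon\sum_i\mu_i\Vert w_{i,x}\Vert^2_{L^2}$ and the reaction damping $-\sum_i\mu_i c_i\Vert w_i\Vert^2_{L^2}$, while for the derivative part I would substitute the equation through the identity $\frac{d}{dt}\frac12\int w_x^\top\Sigma w_x=[w_t^\top\Sigma w_x]_0^1-\Vert w_t\Vert_{L^2}^2+\int w_t^\top\Phi w_x-\int w_t^\top G\,w_x(0)$; crucially, the reaction contributes a \emph{perfect} time-derivative $-\frac{d}{dt}\frac12\int w^\top C w$, which is folded into $V$ and therefore produces no spurious $\Vert C\Vert^2$ term. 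The advection is split by Young's inequality between the diffusion and reaction dissipations, and it is exactly this balance that forces the threshold: the leftover $\Vert w\Vert^2$ coefficient stays negative only when $c_i\ge c^*+\delta$ with $c^*$ governed by $\Vert\Phi\Vert_\infty^2/\underline\epsilon$, hence depending only on $\Sigma$ and $\Phi$. The boundary terms at $x=1$ are absorbed into $|b|^2+|\dot b|^2$ via the trace inequality, and the residual $\Vert w_{xx}\Vert^2$-type quantities are dominated by the $-\Vert w_t\Vert_{L^2}^2$ dissipation through elliptic regularity.

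The step I expect to be the main obstacle is the nonlocal term $-G(x)w_x(0,t)$. Because $G$ is \emph{strictly lower triangular} (\ref{eqn-G}), the forcing on the $i$-th equation involves only the traces $w_{j,x}(0,t)$ with $j<i$, each controlled by the parabolic dissipation of a lower-indexed component via the trace inequality above. Choosing the hierarchy $\mu_1\gg\cdots\gg\mu_n$ lets these cross terms be absorbed into the already-available dissipation of components $j<i$ \emph{without} drawing on the reaction damping, so that $G$, $\Lambda$ and $C$ affect only the constants $\mu_i$, $D_1$ and $D_3$, and not the threshold $c^*$. Carrying out this bookkeeping so that $c^*$ genuinely depends on $\Sigma,\Phi$ alone is the crux of the argument; once it is done one obtains $\dot V\le-2\delta V+D\,(|b|^2+|\dot b|^2)$, and Gronwall's inequality together with the $L^2$-in-time bounds on $b,\dot b$ yields the claimed estimate, with exponential rate $2\delta$ on the initial data and an additive $\Vert b\Vert_{L^2}+\Vert\dot b\Vert_{L^2}$ contribution.
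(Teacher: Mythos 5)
Your proposal is correct and, on the decisive half of the proposition (the decay estimate), it is essentially the paper's own argument: a component-weighted $H^1$ Lyapunov functional with hierarchically ordered weights, chosen precisely so that the nonlocal term $-G(x)w_x(0,t)$---which, $G$ being strictly lower triangular as in (\ref{eqn-G}), forces equation $i$ only through the traces $w_{j,x}(0,t)$ with $j<i$---is absorbed by the dissipation of lower-indexed components, leaving a threshold $c^*$ that depends on $\Sigma$ and $\Phi$ only (essential, since $G$ itself depends on $C$ through the kernel). The paper implements your ``$\mu_1\gg\cdots\gg\mu_n$'' rigorously: after the same trace inequalities you invoke, the $G$-terms reduce to the requirement that $\frac{\underline{\epsilon}}{4}Q-K_8L^TQL$ be positive definite, where $L$ is the strictly lower triangular matrix of ones, and the diagonal weight matrix $Q$ is constructed by induction on $n$ using Weyl's inequality~\cite{johnson}, each new weight taken small relative to $\lambda_{\mathrm{min}}$ of the previously built matrix; that is exactly the bookkeeping you identify as the crux, and it closes. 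The genuine differences are peripheral. For well-posedness the paper merely checks the compatibility $w_0(1)=b(0)$ and cites standard parabolic theory~\cite{evans}, whereas you lift the boundary datum ($v=w-\ell b$) and run a perturbed-analytic-semigroup/maximal-regularity argument; yours is more self-contained and treats the trace perturbation $-Gv_x(0,t)$ explicitly via a relative-bound estimate. In the energy computation the paper integrates $\dot V_2$ by parts so as to keep $-\int_0^1 w_{xx}^TQ\Sigma(x)w_{xx}\,dx$ as the second-order dissipation and bounds both traces $w_x(0,t)$, $w_x(1,t)$ by $V_2+V_3$; you instead generate $-\Vert w_t\Vert_{L^2}^2$ (folding $\int_0^1 w^TCw\,dx$ into the functional as a perfect derivative) and recover $\Vert w_{xx}\Vert_{L^2}^2$ through elliptic regularity. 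Your variant works, but the elliptic-regularity constants involve $C$ and $G$, so you must---as you note---spend the Young parameters attached to $\vert b\vert,\vert\dot b\vert$ to keep those constants out of the decay rate, a complication the paper's direct $w_{xx}$ bookkeeping sidesteps.
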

\begin{proof}
First we show the well-posedness result, which is standard, by noticing that the right-hand side of (\ref{eqn-reacdiftarg}) defines a parabolic operator, the 0-th order compatibility conditions are verified (due to the fact that $w_0(1)=b(0)$), and $w_0 \in H^1$. Thus $w\in L^2(0,T;H^2([0,1]))$ and $w_t \in L^2(0,T;L^2([0,1]))$, see any standard PDE textbook such as e.g.~\cite[p.382]{evans}. The trace terms $w_x(1,t)$ are not classically considered but do not complicate the proof as they are as regular as the differential operator.
Now, to show the stability result, consider the following Lyapunov functionals
\begin{eqnarray}
V_1(t)&=&\frac{1}{2} \int_0^1  w^T Q w dx\label{eqn-V1},\\
V_2(t)&=&\frac{1}{2} \int_0^1 w_x^T Q w_x dx,\label{eqn-V2},\\
V_3(t)&=&\frac{1}{2} \int_0^1 w_{xx}^T Q w_{xx} dx,\label{eqn-V3}
\end{eqnarray}
where the space and time dependence of $w$ has been omitted for simplicity. The matrix $Q$ is a square diagonal matrix, with diagonal elements denoted  as $q_1,\hdots,q_n$ and chosen positive, so that $\underline{q}\leq q_i \leq \bar q$, so that $Q>0$. It is obvious that $V_1+ V_2$, is equivalent to the $H^1$ norm of $u$, i.e., $K_3 (V_1+V_2) \leq \Vert u(x,\cdot) \Vert_{H^1} \leq K_4 (V_1+  V_2)$ for $K_3,K_4>0$.

Taking derivatives we obtain, for $\dot V_1$,
\begin{eqnarray}
\dot V_1&=& \int_0^1 w^TQ \left(
\partial_x\left(\Sigma(x) w_{x}\right) \right)dx
\nonumber \\Ê&&
+
 \int_0^1 w^T Q \left(\Phi(x) w_x-Cw-G(x)w_x(0,t)
 \right) dx
 \nonumber \\Ê&=&
- \int_0^1 w_x^T Q\Sigma(x) w_{x}dx
+b^T(t)  Q \Sigma(1) w_{x}(1,t) 
\nonumber \\Ê&&
+
 \int_0^1 w^T Q \Phi(x) w_x dx
 - \int_0^1 w^T QCw dx\nonumber \\Ê&&
 -\left(\int_0^1 w^T QG(x)dx\right) w_x(0,t)
\end{eqnarray}
Now, assuming that, for all $x\in[0,1]$, the coefficients verify the following bounds: $\Vert \Phi(x) \Vert \leq p$, $\underline{c}\leq c_i \leq \bar c $, $\vert g_{ij}(x)\vert \leq g$, where $\Vert \cdot \Vert$ is the matrix operator 2-norm. Then
\begin{eqnarray}
\dot V_1&\leq& 
-2\underline{\epsilon}V_2
+ \bar{\epsilon}\bar{q} \left| b(t)^T w_{x}(1,t)\right|+2p(V_1+V_2)\nonumber \\Ê&&
-(2\underline c-1) V_1
+\frac{g^2}{2}\vert \sqrt{Q}L w_x(0,t) \vert^2,
\end{eqnarray}
where $L$ is a lower triangular matrix with zero diagonal and unity coefficients, i.e.,
 \begin{equation}\label{eqn-L}
 L=\left[\begin{array}{ccccc}0&0&\hdots&0&0 \\ 1&0&\hdots&0&0 \\
\vdots&\vdots&\ddots&\vdots&\vdots\\
1&1&\hdots &0& 0 \\
1&1&\hdots &1& 0 \end{array}\right].
 \end{equation}

Similarly, taking derivative in $V_2$ we obtain
\begin{eqnarray}
\dot V_2&=&
 \int_0^1 w_x^TQ w_{xt} dx
\nonumber \\Ê&=& 
 -\int_0^1 w_{xx}^T Q w_{t} dx
 +\left. w_x^T Qw_{t}\right|_0^1
\nonumber \\Ê&=&  
 -\int_0^1 w_{xx}^TQ \left(
\partial_x\left(\Sigma(x) w_{x}\right) \right)dx
 + w_x(1,t)^TQ \dot b(t)
\nonumber \\Ê&&
-
 \int_0^1 w_{xx}^T Q \left(\Phi(x) w_x-Cw-G(x)w_x(0,t)
 \right) dx
 \nonumber \\Ê&=&
- \int_0^1 w_{xx}^T Q\Sigma(x) w_{xx}dx
- \int_0^1 w_{xx}^TQ \Sigma'(x) w_{x}dx
\nonumber \\Ê&&
 + w_x(1,t)^T Q \dot b(t)+
 \int_0^1 w_{xx}^TQ \Phi(x) w_x dx
\nonumber \\Ê&&
 - \int_0^1 w_{x}^T QCw_x dx
 + w_x(1,t)^TQ C b(t)\nonumber \\Ê&&
 -\left(\int_0^1 w_{xx}^T Q G(x)dx\right) w_x(0,t).
\end{eqnarray}
Therefore, defining  $\epsilon'_i(x) \leq \bar{\epsilon}'$ and using the previously defined bounds,
\begin{eqnarray}
\dot V_2&\leq& 
-\left[\underline{\epsilon}-\left(\frac{\alpha_2\bar{\epsilon}'}{2}+\frac{p\alpha_3}{2}+\frac{g\alpha_4}{2} \right)\right] V_3-2\underline c V_2
\nonumber \\Ê&&
+\left(\frac{\bar{\epsilon}'}{\alpha_2}+\frac{p}{\alpha_3}\right)  V_2
+\bar q \left| \left(\dot b(t)+Cb(t)\right)^T w_{x}(1,t)\right|
\nonumber \\Ê&&
+\frac{g}{2\alpha_4}\vert \sqrt{Q}L w_x(0,t) \vert^2),
\end{eqnarray}
for $\alpha_2,\alpha_3,\alpha_4>0$. Now we have the following inequality
\begin{equation}
\left|w_x(1,t)\right|^2\leq
\frac{2}{\underline q} (V_2+V_3)
\end{equation}
which is proven by considering that
\begin{equation}
w_x(1,t)=\int_0^1 \left[(x-1)w_{x}(x,t)\right]_x dx,
\end{equation}
therefore
\begin{equation}
\left|w_x(1,t)\right|\leq
\int_0^1 \left[\vert w_{x}(x,t)\vert+\vert w_{xx}(x,t)\vert\right] dx,
\end{equation}
which squared, gives the inequality. In a similar fashion, we can prove that
\begin{equation}
\left|\sqrt{Q}Lw_x(0,t)\right|\leq
\int_0^1 \left[\vert \sqrt{Q}Lw_{x}\vert+\vert \sqrt{Q}Lw_{xx}\vert\right] dx,
\end{equation}
thus
\begin{equation}
\left|\sqrt{Q}Lw_x(0,t)\right|^2\leq 
\int_0^1 \frac{w_x^TL^TQLw_{x}+ w_{xx}^TL^TQLw_{xx}}{2} dx,
\end{equation}

Considering now $V=V_1+ V_2$, we obtain
\begin{eqnarray}
\dot V&\leq&
-V_1 \left[2\underline c-\left(2p+1 \right)\right]
\nonumber \\Ê&&
-V_2 \left[2\underline{\epsilon}+2\underline c-\left(2p+ \left(\frac{\bar{\epsilon}'}{\alpha_2}+\frac{p}{\alpha_3}\right)\right)\right]
\nonumber \\Ê&&
-V_3 \left[\underline{\epsilon}-\left(\frac{\alpha_2\bar{\epsilon}'}{2}+\frac{p\alpha_3}{2}+\frac{g\alpha_4}{2} \right)\right]
\nonumber \\Ê&&
+\frac{g}{2} \left(\frac{1}{\alpha_4}+g\right)\int_0^1 \frac{w_x^TL^TQLw_{x}+ w_{xx}^TL^TQLw_{xx}}{2} dx
\nonumber \\Ê&&
+\frac{\bar q}{2} \left((1+\bar c)+\bar \epsilon\right) \left( \vert\dot b(t)\vert+\vert b(t)\vert\right)\sqrt{\frac{2}{\underline q} (V_2+V_3)},
\end{eqnarray}
Choose now $\alpha_2=\frac{\underline{\epsilon}}{3\bar \epsilon'}$,$\alpha_3=\frac{\underline{\epsilon}}{3\bar p}$,$\alpha_4=\frac{\underline{\epsilon}}{3g}$ so that $\left(\frac{\alpha_2\bar{\epsilon}'}{2}+\frac{p\alpha_3}{2}+\frac{g\alpha_4}{2} \right)<\underline{\epsilon}/2$. Call $K_5=2p+1$, $K_6=\left(2p+\frac{\underline{\epsilon}}{4} + \frac{3}{\underline{\epsilon}} \left( \bar{\epsilon}'^2+p^2\right)\right)-2\underline{\epsilon}$, $K_7=\frac{\bar q^2}{2\underline{\epsilon}\underline q} \left((1+\bar c)+\bar \epsilon\right)^2$, $K_8=\frac{g^2}{2} \left(\frac{1}{3 \underline{\epsilon}}+1\right)$. Then:
\begin{eqnarray}
\dot V&\leq&
-V_1 \left[2\underline c-K_5\right]
-V_2 \left[2\underline c-K_6\right]
-\frac{\underline{\epsilon}}{4}V_3 
\nonumber \\Ê&&
+K_7\left( \vert\dot b(t)\vert^2+\vert b(t)\vert^2\right)
\nonumber \\Ê&&
+K_8\int_0^1 \frac{w_x^TL^TQLw_{x}+ w_{xx}^TL^TQLw_{xx}}{2} dx,
\end{eqnarray}
and defining $c^*=\frac{1}{2} \max\{K_5,K_6+\frac{\underline{\epsilon}}{4}\}$ (which only depends on the bounds of $\Sigma(x)$ and $\Phi(x)$), we get that if $\underline c\geq c^*+\delta$, we obtain
\begin{eqnarray}
\dot V&\leq&-2 \delta V 
+K_7\left( \vert\dot b(t)\vert^2+\vert b(t)\vert^2\right)
\nonumber \\Ê&&
-\int_0^1 \frac{w_x^TRw_{x}+ w_{xx}^TRw_{xx}}{2} dx,
\end{eqnarray}
where  $R=\frac{\underline{\epsilon}}{4}Q-K_8L^TQL$ and $D_2$ can be set as large as desired. Assume for the moment that $R$ is definite positive. Then, applying Gronwall's inequality, we obtain
\begin{eqnarray}
V &\leq V(0) &\mathrm{e}^{-2\delta t}+K_7 \int_0^t \mathrm{e}^{-2\delta(t-\tau)} 
\left( \vert\dot b(\tau)\vert^2+\vert b(\tau)\vert^2\right) d\tau \nonumber \\ 
&\leq&V(0) \mathrm{e}^{-2\delta t} + K_7\left(\Vert b \Vert_{L^2}+\Vert \dot b \Vert_{L^2} \right),
\end{eqnarray}
and then the proposition is proved. It only remains to prove that $R$ can be made a positive definite matrix by adequately choosing the coefficients of $Q$. To see if this is possible, let us check what is $L^TQL$. First notice that $(QL)_{ij}=q_i$ if $j<i$ and zero otherwise. Then
 \begin{equation}\label{eqn-LQL}
 (L^TQL)_{ij}=\sum_{l=1}^n L_{li}  (QL)_{lj}
 =\sum_{l=i+1}^n (QL)_{lj}=\hspace{-3pt}
 \sum_{\max\{i,j\}+1}^{n}\hspace{-3pt} q_l,
  \end{equation}
  where the sum is considered to be zero if $i=n$ and/or $j=n$.
  Let us now prove by induction on the dimension $n$ that $R(Q)=\frac{\underline{\epsilon}}{4}Q-K_8L^TQL$ can always be made positive definite. Call $Q_n$ the matrix that we will find for each dimension, and $M_n=L_n^TQ_nL_n$. For $n=1$, since $Q_1=q_1>0$ and $M_1=0$, the result is obvious and $q_1$ can be chosen arbitrarily. For $n>1$, we can construct both $Q_n$ and $M_n$ from the previous $Q_{n-1}$ and $M_{n-1}$ as follows
   \begin{equation}
 Q_n=\left[\begin{array}{cc}Q_{n-1}&0\\
0&q_n\end{array}\right],\quad
 M_n=\left[\begin{array}{cc}M_{n-1}+q_n\mathbf{J}_{n-1}&0\\
0&0\end{array}\right],
 \end{equation}
 where $\mathbf{J}_{n-1}$ is a square matrix of dimension $n-1$ full of ones. Assume now that $R(Q_{n-1})$ is positive definite. In particular this means that all the eigenvalues of $R(Q_{n-1})$ are positive, and since $R$ is symmetric, they are also real. Call $\lambda_{\mathrm{min}}$ the smallest eigenvalue of a square matrix. Denote  $\mu_{n-1}=\lambda_{\mathrm{min}}(R(Q_{n-1}))>0$. Choosing $q_n=\frac{\mu_{n-1}}{2K_8(n-1)}$, we obtain
 \begin{equation}
 R(Q_n)=
\left[\begin{array}{cc}R(Q_{n-1})-\frac{\mu_{n-1}}{2(n-1)}\mathbf{J}_{n-1}&0\\
0&\frac{\underline{\epsilon} \mu_{n-1}}{8K_8(n-1)}\end{array}\right],\quad
 \end{equation}
 and computing the eigenvalues of $R(Q_n)$, we obtain one eigenvalue equal to $\frac{\underline{\epsilon} \mu_{n-1}}{8K_8(n-1)}>0$, plus the eigenvalues of $R(Q_{n-1})-\frac{\mu_{n-1}}{2(n-1)}\mathbf{J}_{n-1}$. Now, from Weyl's inequality~\cite[p. 239]{johnson} we then have that
 \begin{eqnarray}
&& \lambda_{\mathrm{min}}\left(R(Q_{n-1})-\frac{\mu_{n-1}}{2(n-1)}\mathbf{J}_{n-1}\right)\nonumber \\
 &\leq& \lambda_{\mathrm{min}}\left(R(Q_{n-1})\right)-\lambda_{\mathrm{min}}\left(\frac{\mu_{n-1}}{2(n-1)}\mathbf{J}_{n-1}\right)\nonumber \\
 &=&\frac{\mu_{n-1}}{2}>0,
\end{eqnarray}
where we have used that the eigenvalues of $\mathbf{J}_{n-1}$ are $0$ (repeated $n-2$ times) and $n-1$~\cite[p. 65]{johnson}. Therefore the newly formed $R(Q_n)>0$, and the proposition is proved.
\end{proof}

\subsection{Proof of Theorem~\ref{th-main}}\label{sec-proofmain}
Assume for the moment that Theorem~\ref{th-kernels} holds and there is a solution to the kernel equations such that the transformation (\ref{eqn-tran}) is invertible and both the transformation and its inverse map $H^1$ functions into $H^1$ functions. Consider now the target system equation (\ref{eqn-reacdiftarg}) with boundary conditions (\ref{eqn-bctarg}) and initial conditions $w_0(x)$ given by applying the backstepping transformation (\ref{eqn-tran}) to the initial conditions of $u$, $u_0(x)$, i.e.,
\begin{equation}\label{eqn-w0}
w_0(x)=u_0(x)-\int_0^x K(x,\xi) u_0(\xi)d\xi.
\end{equation}
Then, since $u_0 \in H^2$, we have $w_0$ in $H^2$. In addition, given the definition of $b(t)$ (see Equation~\ref{eqn-defb}), we have $w_0(1)=b(0)$, and $b,\dot b \in L^2([0,\infty)]$. Thus the conditions for well-posedness of Proposition~\ref{prop-wptarget} are fulfilled and we obtain well-posedness for $u$ in $H^1$ given the properties of the transformation. In addition it is obvious that $\Vert b \Vert_{L^2}+\Vert \dot b \Vert_{L^2} \leq D_4 \mathrm{e}^{-\alpha_1 t} \Vert u_0 \Vert_{H^1}$. We then obtain, if $c_i\geq c^*+\delta$,
\begin{eqnarray}
\Vert u(\cdot,t) \Vert_{H^1} \hspace{-4pt}
&\hspace{-4pt}\leq\hspace{-4pt}&\hspace{-4pt}  K_2 \Vert w(\cdot,t) \Vert_{H^1} \nonumber \\
\hspace{-4pt}
&\hspace{-4pt}\leq\hspace{-4pt}&\hspace{-4pt} 
K_2 \left(D_1 \mathrm{e}^{-2\delta t} \Vert w_0 \Vert_{H^1}+D_3 \left(\Vert b \Vert_{L^2}+\Vert \dot b \Vert_{L^2} \right) \right) 
\nonumber \\
\hspace{-4pt}
&\hspace{-4pt}\leq\hspace{-4pt}&\hspace{-4pt}  K_2  (K_1D_1+D_4)\mathrm{e}^{-D_5 t}\Vert u_0 \Vert_{H^1},
\end{eqnarray}
where $D_5=\min\{\alpha_1,2\delta \}$. Thus Theorem~\ref{th-main} is proved. \qed
\section{Well-posedness of the kernel equations (proof of Theorem~\ref{th-kernels})}\label{sec-kernels}
To prove Theorem~\ref{th-kernels}, we are going to write the kernel equations (\ref{eqn-Kdif})--(\ref{eqn-Kbc3}) in a different form. Then, we can use Theorem A.1 of~\cite{long-nonlinear}.

Define first
\begin{eqnarray}
 L(x,\xi)&=&\sqrt{\Sigma}(x)K_x(x,\xi)+K_{\xi}(x,\xi)\sqrt{\Sigma}(\xi)
\nonumber \\ 
&&+
F_1(x,\xi)K(x,\xi)+K(x,\xi) F_2(x,\xi) \label{eqn-Ldef}
\end{eqnarray}
where the functions $F_1$ and $F_2$ are to be found.

Now, we compute $ \sqrt{\Sigma}(x)L_x-L_{\xi}\sqrt{\Sigma}(\xi)$ using (\ref{eqn-Ldef}). It is worth noticing that the cross-derivatives of $K$ cancel out and the differential operator of  (\ref{eqn-Kdif}) appear. Replacing its value from (\ref{eqn-Kdif}), we obtain
\begin{eqnarray}
&& \sqrt{\Sigma}(x)L_x-L_{\xi}\sqrt{\Sigma}(\xi)
\nonumber \\ 
&=&
K(\Lambda(\xi)-\Phi'(\xi)-F_{2\xi}\sqrt{\Sigma}(\xi)-F_2^2)
\nonumber \\ &&
+(C+\sqrt{\Sigma}(x)
F_{1x}+F_1^2)K
\nonumber \\ &&
-
\left(\frac{\Sigma'(x)}{2}+\Phi(x)
-\sqrt{\Sigma}(x)F_1
-F_1\sqrt{\Sigma}(x)
   \right)K_x
   \nonumber \\ &&
+K_{\xi}\left(\frac{\Sigma'(\xi)}{2}-\Phi(\xi) 
-F_2 \sqrt{\Sigma}(\xi)
-\sqrt{\Sigma}(\xi)  F_2
\right)
\nonumber \\ &&
+\sqrt{\Sigma}(x)
K F_{2x}
-F_{1\xi}K\sqrt{\Sigma}(\xi)
\nonumber \\ &&
-F_1 L + L F_2\label{eqn-KLsub}
\end{eqnarray}
Now, $F_1$ and $F_2$ are chosen so that the second and third lines of (\ref{eqn-KLsub}) cancel out. This is always possible~\cite{jameson}, by defining
\begin{eqnarray}
(F_1)_{ij}&=&\frac{\delta_{ij}\frac{\epsilon_i'(x)}{2}+\phi_{ij}(x)}{\sqrt{\epsilon_i}(x)+\sqrt{\epsilon_j}(x)},\\
(F_2)_{ij}&=&\frac{\delta_{ij}\frac{\epsilon_i'(\xi)}{2}-\phi_{ij}(\xi)}{\sqrt{\epsilon_i}(\xi)+\sqrt{\epsilon_j}(\xi)},
\end{eqnarray}
and noticing that $F_1$ only depends on $x$ and $F_2$ only depends on $\xi$, the fourth line of (\ref{eqn-KLsub}) is also zero.
Thus our original $n\times n$ system  (\ref{eqn-Kdif}) is replaced by a $n^2\times n^2$ system of first-order hyperbolic equation on the same domain $\mathcal{T}$, namely
\begin{eqnarray}
\sqrt{\Sigma}(x)K_x+K_{\xi}\sqrt{\Sigma}(\xi)
&=& L-
F_1(x)K-KF_2(\xi),\quad \label{eqn-KL1} \\
 \sqrt{\Sigma}(x)L_x-L_{\xi}\sqrt{\Sigma}(\xi)
&=&KF_3(\xi)+F_4(x) K
\nonumber \\Ê&&
-F_1(x) L + L F_2(\xi),\label{eqn-KL2}
\end{eqnarray}
where $F_3(\xi)=\Lambda(\xi)-\Phi'(\xi)- F_{2\xi}\sqrt{\Sigma}(\xi)-F_2^2$, $F_4(x)=C+\sqrt{\Sigma}(x)
F_{1x}+F_1^2$, which are virtually identical to the kernel equations appearing in~\cite{long-nonlinear} and~\cite{long-linear} (there are some differences in the right-hand side coefficients, but they do not affect the proofs). It remains to be seen if the boundary conditions are the same.

To find the boundary conditions for $L$, we need to analyze separate cases depending on the position of each coefficient $L_{ij}$ and $K_{ij}$ in the kernel matrices $L$ and $K$. First, (\ref{eqn-Kbc2}), namely $K(x,x) \Sigma(x)=\Sigma(x) K(x,x)$ can be written as $K_{ij}(x,x) (\epsilon_i(x)-\epsilon_j(x))=0$. This condition is automatically verified if $i=j$, otherwise $K_{ij}(x,x)=0$. This allows us to write (\ref{eqn-Kbc1}) as
\begin{eqnarray}
0&=& \phi_{ij}(x) K_{jj}(x,x)-\phi_{ij}(x)K_{ii}(x,x)+\lambda_{ij}(x)+\delta_{ij}c_{i}
\nonumber \\ &&
+K_{ij\xi}(x,x)\epsilon_j(x)
+\epsilon_i(x) K_{ijx}(x,x)
\nonumber \\ &&
+\frac{d}{dx}\left(\epsilon_i(x) K_{ij}(x,x)\right),\label{eqn-Kbc1bis}
\end{eqnarray}
and similarly, we can solve for $L_{ij}(x,x)$ in (\ref{eqn-KL1}), finding
\begin{eqnarray}
 L_{ij}(x,x)&=&\sqrt{\epsilon_i}(x)K_{ijx}(x,x)+\sqrt{\epsilon_j}(\xi) K_{ij\xi}(x,x)\nonumber \\ &&
+
F_{ij1}(x)K_{jj}(x,x)+F_{ij2}(x)K_{ii}(x,x),\quad\label{eqn-Lij} \end{eqnarray}

If $i=j$, then (\ref{eqn-Kbc1bis}) reduces to
\begin{eqnarray}
0&=& \lambda_{ii}(x)+c_{i}
\nonumber \\ &&
+\epsilon_i'(x) K_{ii}(x,x)
+2\epsilon_i(x) \frac{d}{dx}\left( K_{ii}(x,x)\right),
\end{eqnarray}
which integrates (combined with (\ref{eqn-Kbc3})) to
\begin{equation}
K_{ii}=\frac{-1}{\sqrt{\epsilon(x)}}\int_0^x \dfrac{\lambda_{ii}(\xi)+c_{i}}{2\sqrt{\epsilon(\xi)}}d\xi
\end{equation}
In addition, (\ref{eqn-Lij}) reduces to
\begin{eqnarray}
 L_{ii}(x,x)&=&\sqrt{\epsilon_i}(x)\frac{d}{dx} K_{ii}(x,x)\nonumber \\ &&
+(F_{ii1}(x)+F_{ii2}(x))K_{ii}(x,x)
\nonumber \\ &=&
\sqrt{\epsilon_i}(x)\frac{d}{dx} K_{ii}(x,x)\nonumber \\ &&
+\frac{\epsilon_i'(x)}{2\sqrt{\epsilon_i}(x)} K_{ii}(x,x)
\nonumber \\ &=&-
\frac{ \lambda_{ii}(x)+c_{i}}{2\sqrt{\epsilon_i}(x)}
,\label{eqn-Lij2} \end{eqnarray}
If $i\neq j$, then since $K_{ij}(x,x)=0$, we get $K_{ijx}(x,x)=-K_{ij\xi}(x,x)$. Therefore we obtain, from  (\ref{eqn-Kbc1bis}),
\begin{eqnarray}
0&=&\lambda_{ij}(x)+ \phi_{ij}(x)\left( K_{jj}(x,x)-K_{ii}(x,x)\right)
\nonumber \\ &&
+ K_{ijx}(x,x) (\epsilon_i(x)-\epsilon_j(x))
\end{eqnarray}
and from (\ref{eqn-Lij}),
\begin{eqnarray}
 L_{ij}(x,x)&=&K_{ijx}(x,x) (\sqrt{\epsilon_i(x)}-\sqrt{\epsilon_j(x)})
 \nonumber \\ &&
 +
F_{ij1}(x)K_{jj}(x,x)+F_{ij2}(x)K_{ii},\label{eqn-Lijbis} \end{eqnarray}
which combined gives us
\begin{eqnarray}
 L_{ij}(x,x)&=&\frac{K_{ijx}(x,x) (\epsilon_i(x)-\epsilon_j(x))}{ \sqrt{\epsilon_i(x)}+\sqrt{\epsilon_j(x)}}
 \nonumber \\ &&
 +
F_{ij1}(x)K_{jj}(x,x)+F_{ij2}(x)K_{ii}
\nonumber \\Ê&=&
-\frac{\lambda_{ij}+\phi_{ij}(x)\left( K_{jj}(x,x)-K_{ii}(x,x)\right)}{ \sqrt{\epsilon_i(x)}+\sqrt{\epsilon_j(x)}}
 \nonumber \\ &&
+
F_{ij1}(x)K_{jj}(x,x)+F_{ij2}(x)K_{ii}
 \nonumber \\ &=&-\frac{\lambda_{ij}}{ \sqrt{\epsilon_i(x)}+\sqrt{\epsilon_j(x)}},
\end{eqnarray}
when introducing the definitions of $F_1$ and $F_2$. Thus we are finally led to the following combination of boundary conditions
\begin{itemize}
\item If $i=j$, then simply
\begin{eqnarray}
L_{ii}(x,x)&=&-\frac{\lambda_{ii}(x)+c_{i}}{2\sqrt{\epsilon_i(x)}},\\
K_{ii}(x,0)&=&0,
\end{eqnarray}
\item If $i<j$ then
\begin{eqnarray}
K_{ij}(x,x)&=&K_{ij}(x,0)=0,\\
L_{ij}(x,x)&=&-\frac{\lambda_{ij}(x)}{\sqrt{\epsilon_i(x)}+\sqrt{\epsilon_j(x)}},\\
\end{eqnarray}
\item Finally if $i>j$ and $\epsilon_i\neq\epsilon_j$ then
\begin{eqnarray}
K_{ij}(x,x)&=&0,\\
K_{ij}(1,\xi)&=&l_{ij}(\xi)\label{eqn-arbitrary},\\
L_{ij}(x,x)&=&-\frac{\lambda_{ij}(x)}{\sqrt{\epsilon_i(x)}+\sqrt{\epsilon_j(x)}},
\end{eqnarray}
and the additional condition $g_{ij}(x)=- K_{ij}(x,0) \epsilon_j(0)$.
\end{itemize}
It must be noticed that (\ref{eqn-arbitrary}) are additional arbitrary conditions that are introduced for the kernel equations to be well-posed. These functions $l_{ij}(\xi)$ cannot be arbitrary, but need to verify certain compatibility conditions in the corner $\xi=1$ for the kernels to be piecewise differentiable (see~\cite{long-nonlinear} for details).

Comparing these boundary conditions with those verified by the kernels in~\cite{long-nonlinear} and~\cite{long-linear}, we can see that they are exactly the same (it must be noted than in the second of these papers the nomenclature for $K$ and $L$ is the opposite). Thus the results in these papers apply, and we obtain a piecewise differentiable and invertible kernel which can be readily verified to transform $H^1$ functions into $H^1$ functions.\qed

\section{Conclusion}\label{sec-conclusions}
This paper presents an extension of the backstepping method to coupled parabolic systems  with advection terms and spatially-varying coefficients. The result is more general than a recently published extension that only considered constant-coefficient coupled reaction-diffusion systems.

Interestingly, the basis of the result is finding an equivalence between the kernel equations for this case and the kernel equations for general hyperbolic 1-D coupled systems, which have recently been  established to be well-posed and piecewise differentiable. Thus, this paper unveils a direct connection between backstepping controllers for parabolic and hyperbolic systems.

Future work includes considering Neumann or Robin boundary conditions, which leads to slightly different kernel equations, and observer design, which will allow to consider output-feedback controllers.

\ifCLASSOPTIONcaptionsoff
  \newpage
\fi

\bibliographystyle{IEEEtran}

\begin{IEEEbiography}[{\includegraphics[width=1in,height=1.25in,clip,keepaspectratio]{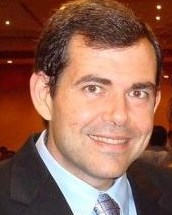}}] 
{Rafael Vazquez}(S'05-M'08-SM'15) received the M.S. and Ph.D. degrees in aerospace engineering from the University of California, San Diego (USA), and degrees in electrical  engineering and mathematics from the University of Seville (Spain). He is  an Associate Professor in the Aerospace Engineering and Fluid Mechanics Department at the University of Seville, where he is currently Chair of the Department. His research interests include control theory, distributed parameter systems, and optimization, with applications to flow control, ATM, UAVs, and orbital mechanics. He is coauthor of the book Control of Turbulent and Magnetohydrodynamic Channel Flows (Birkhauser, 2007). He currently serves as Associate Editor for Automatica.
\end{IEEEbiography}


\begin{IEEEbiography}[{\includegraphics[width=1in,height=1.25in,clip,keepaspectratio]{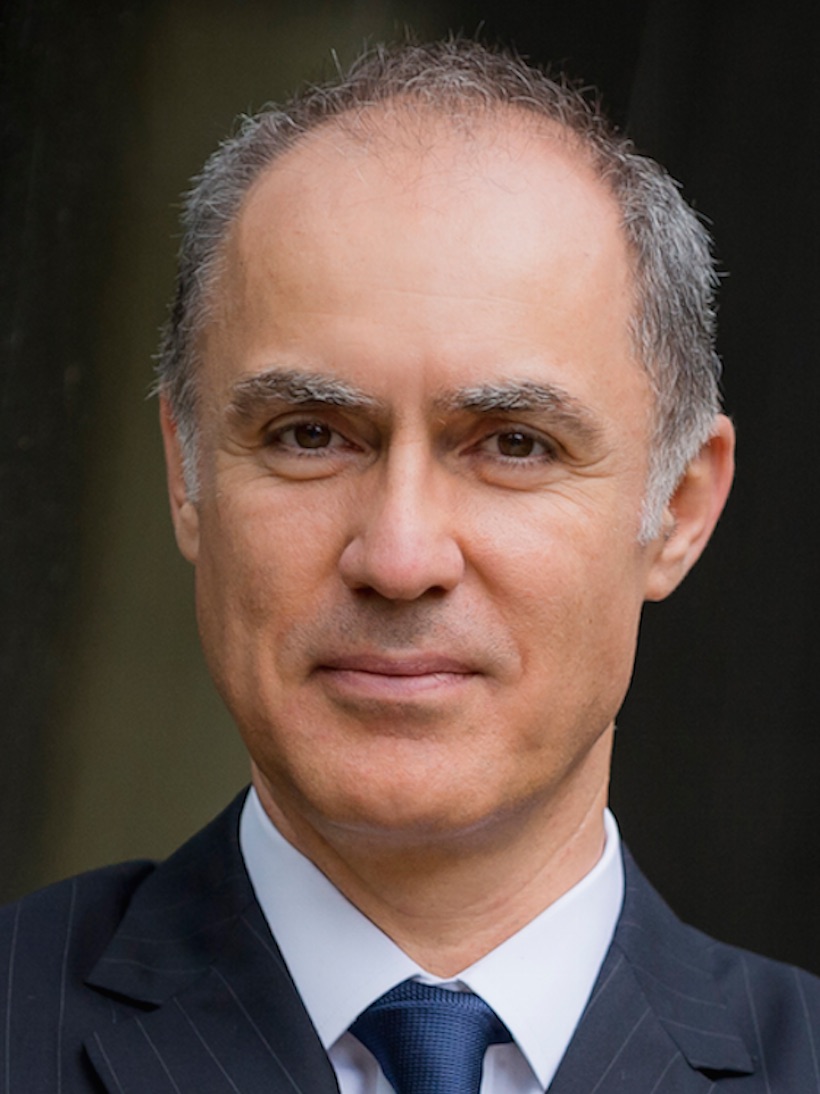}}] {Miroslav Krstic}
(S'92-M'95-SM'99-F'02) holds the Alspach endowed chair and is the founding director of the Cymer Center for Control Systems and Dynamics at UC San Diego. He also serves as Associate Vice Chancellor for Research at UCSD. As a graduate student, Krstic won the UC Santa Barbara best dissertation award and student best paper awards at CDC and ACC. Krstic is Fellow of IEEE, IFAC, ASME, SIAM, and IET (UK), Associate Fellow of AIAA, and foreign member of the Academy of Engineering of Serbia. He has received the PECASE, NSF Career, and ONR Young Investigator awards, the Axelby and Schuck paper prizes, the Chestnut textbook prize, the ASME Nyquist Lecture Prize, and the first UCSD Research Award given to an engineer. Krstic has also been awarded the Springer Visiting Professorship at UC Berkeley, the Distinguished Visiting Fellowship of the Royal Academy of Engineering, the Invitation Fellowship of the Japan Society for the Promotion of Science, and the Honorary Professorships from the Northeastern University (Shenyang) and the Chongqing University, China. He serves as Senior Editor in IEEE Transactions on Automatic Control and Automatica, as editor of two Springer book series, and has served as Vice President for Technical Activities of the IEEE Control Systems Society and as chair of the IEEE CSS Fellow Committee. Krstic has coauthored eleven books on adaptive, nonlinear, and stochastic control, extremum seeking, control of PDE systems including turbulent flows, and control of delay systems.
\end{IEEEbiography}

%
%
\end{document}